\newcommand{\tf}{t_\mathrm{f}}
\newcommand{\Ut}{U_{[t_0,\tf]}}
\newcommand{\Vt}{V_{[t_0,\tf]}}
\newcommand{\Wt}{W_{[t_0,\tf]}}
\newcommand{\Vcal}{\mathcal{V}}
\newcommand{\Sscr}{\mathscr{S}}
\renewcommand{\:}{\mathcal{\colon}}
\newcommand{\NN}{\mathbb{N}}
\newcommand{\RR}{\mathbb{R}}
\newcommand{\laplace}{\Delta}
\newcommand{\Lagr}{\mathcal{L}}
\DeclareMathOperator{\esssup}{ess\,sup}
\newcommand{\CQref}{\text{(CQ)}}
\begin{document}

\title[On the Value Function in Mixed-Integer Optimal Control]{Lipschitz Continuity of the Value Function in Mixed-Integer Optimal Control Problems$^*$}

\author{Martin Gugat$^\dag$,~Falk M. Hante$^\dag$}

\date{December 29, 2016}

\keywords{Parametric optimal control; parametric switching control; parametric optimization;
sensitivity; mixed-integer optimal control problems; optimal value function; Lipschitz
continuity}

\thanks{$^*$~The article is published in \emph{Math. Control Signals Syst.} (2017) 29:3.\\
\hspace*{\parindent}$^\dag$~Lehrstuhl für Angewandte Mathematik 2, Department Mathematik, Friedrich-Alexander Universität Erlangen-Nürnberg, \url{{falk.hante,martin.gugat}@fau.de}.}

\begin{abstract}
We study the optimal value function for control problems on Banach spaces
that involve both continuous and discrete control decisions. 
For problems involving semilinear dynamics subject to mixed control 
inequality constraints, one can show that the optimal value depends 
locally Lip\-sch\-itz con\-ti\-nuous\-ly on perturbations of the 
initial data and the costs under rather natural assumptions. We prove 
a similar result for perturbations of the initial data, the constraints and the costs 
for problems involving linear dynamics, convex costs and convex constraints 
under a Slater-type constraint qualification. We show by an example that 
these results are in a sense sharp.
\end{abstract}

\maketitle

\theoremstyle{plain}
\newtheorem{theorem}{Theorem}
\newtheorem{proposition}{Proposition}
\newtheorem{corollary}{Corollary}
\newtheorem{lemma}{Lemma}
\newtheorem{hypothesis}{Hypothesis}
\theoremstyle{remark}
\newtheorem{assumptions}{Assumption}
\newtheorem{definition}{Definition}
\newtheorem{remark}{Remark}
\newtheorem{example}{Example}

\section{Introduction}
In this paper we address the robustness of solutions to optimal control problems that
involve both continuous-valued and discrete-valued control decisions to steer
solutions of a differential equation such that an associated cost is minimized.
This problem class includes in particular optimal control of switched
systems \cite{Antsaklis2014,Zuazua2011}, but also optimization of systems with coordinated
activation of multiple actuators, for example, at different locations in space for certain
distributed parameter systems \cite{IftimeDemetriou2009,HanteSager2013}. In analogy to mixed-integer programming we call such
problems mixed-integer optimal control problems. Algorithms to compute solutions
to such problems are discussed in \cite{Gerdts2006,Sager2009,HanteSager2013,SastryEtAl2013a,SastryEtAl2013b,RuefflerHante2016}.
From a theoretical point of view, but also for a reliable application of such algorithms, the robustness
of the solution with respect to perturbation of data in the problem is essential, for instance,
in the case of uncertain initial data. We consider the robustness of the optimal value
because this is the criterion determining the control decision. Moreover, we understand robustness 
in the sense that we consider the regularity of the optimal value as a function of the problem parameters.

For continuous optimization problems many sensitivity results are available,
see \cite{bonnansshapiro,MR2421286}. In particular certain regularity assumptions
and constraint qualifications guarantee the continuity of the optimal value function,
see \cite{MR669727,gu:onesi}. In the context of mixed-integer programming, in general,
the main difficulty is that the admissible set consists of several connected components and
jumps in the optimal value as function of the problem parameters can occur if due to parameter
changes connected components of the feasible set vanish. In mixed-integer linear programming
with bounded feasible sets, the continuity of the value function is therefore equivalent
to existence of a Slater-point \cite{Williams1989}. For mixed-integer convex programs,
constraint qualifications are given in \cite{gu97} which yield the existence of one-sided
directional derivatives of the value function and hence its Lipschitz continuity.
For optimal control problems in general, it is well known that one cannot expect more
regularity of the optimal value function than Lipschitz continuity. The following example
is an adaption of a classical one saying that this is also true for integer, and hence
mixed-integer controlled systems.
\begin{example}\label{ex:nonsmooth}
For some $\tf>0$ and $\lambda \in \RR$, consider the problem
\begin{equation}
\left.\begin{array}{l}
 \text{minimize}~y(\tf)~\text{subject to}\\
 \quad \dot{y}(t)=v(t)\,y(t),~\text{for a.\,e.}~t \in (0,\tf),\quad y(0)=\lambda\\
 \quad y(t) \in \RR,~v(t) \in \{0,1\}~\text{for a.\,e.}~t \in (0,\tf).
\end{array}\right\}
\end{equation}
The optimal value function $\nu(\lambda)=\inf\{y(\tf;\lambda) : v \in L^\infty(0,\tf;\{0,1\})\}$ can easily be seen to be
\begin{equation*}
 \nu(\lambda)=\begin{cases}e^{\tf}\lambda, &~\lambda<0,\\ \lambda &~\lambda \geq 0, \end{cases}
\end{equation*}
which is Lipschitz continuous, but not differentiable in $\lambda=0$.
\end{example}

For semilinear mixed-integer optimal control problems, we show below that for parametric initial data
as in the example, local Lipschitz continuity of the optimal value function can indeed be guaranteed
for a rather general setting without imposing a Slater-type condition. Similar results are well-known in
the classical Banach or Hilbert space case without mixed control constraints
\cite{CannarsaFrankowska1992,BarbuDaPrato1983}.
Further, we analyze parametric control constraints and parametric cost functions for convex programs.
For this case, we formulate a Slater-type condition guaranteeing again the local Lipschitz continuity
of the optimal value function.
Finally, for convex programs, we can combine both results to obtain local Lipschitz continuity jointly
for parametric initial data, control constraints and cost functions.

\section{Setting and Preliminaries}
Let $Y$ be a Banach space, $U$ be a complete metric space,
$\Vcal$ be a finite set, and $f\: [t_0,\tf] \times Y \times U \times \Vcal \to Y$.
We consider the control system
\begin{equation}\label{eq:controlsys}
\dot{y}(t) = A y(t) + f(t,y(t),u(t),v(t)),~t \in (t_0,\tf)~\text{a.\,e.},
\end{equation}
where $[t_0,\tf]$ is a finite time horizon with $t_0<\tf$, $A\: D(A) \to Y$ is a generator of a
strongly continuous semigroup $\{T(t)\}_{t \geq 0}$ of bounded linear operators on $Y$, and
where $u\: [t_0,\tf] \to U$ and $v\: [t_0,\tf] \to \Vcal$ are two independent measurable control
functions. Throughout the paper we consider the Lebesgue-measure. Our main concern will be the
confinement that the control $v$ only takes values from a finite set. Without loss of generality,
we may identify $\Vcal$ with a set of integers $\{0,1,\ldots,N-1\}$ and, in analogy to mixed-integer
programming, we refer to \eqref{eq:controlsys} as a \emph{mixed-integer control system}, where $u$
represents ordinary controls and $v$ integer controls.  Let $\Ut$ be a Banach subspace of
measurable ordinary control functions $u\: [t_0,\tf] \to U$ and let $\Vt$ be the set of
measurable integer control functions $v\: [t_0,\tf] \to \Vcal$. By the assumed finiteness
of $\Vcal$ we actually have $\Vt=L^\infty(t_0,\tf;\Vcal)$.

Let $\Lambda$ be a Banach space and consider \eqref{eq:controlsys} subject to a parametric
initial condition
\begin{equation}\label{eq:initialcondition}
 y(t_0)=y_0(\lambda),
\end{equation}
where $y_0(\lambda)$ is an initial state in $Y$ parametrized by $\lambda \in \Lambda$.

The separation of the control in $u$ and $v$ and the inherent integer confinement of the latter
control lets us formulate parametric control constraints of the mixed form
\begin{equation}\label{eq:controlrestriction}
g_k^v(\lambda,u,t) \leq 0,~k=1,\ldots,M,~t \in [t_0,\tf]
\end{equation}
where $M\in\NN$ and, for every $v \in \Vt$, the functions $g_1^v,\ldots,g_M^v \: \Lambda \times \Ut \times [t_0,\tf] \to \RR$
are given. These constraints can for example model anticipating control restrictions, where a decision
represented by $v$ at an earlier time limits control decisions for $u$ at different times. We discuss an example
in Section~\ref{sec:example}. In cases without mixed control constraints, we set $M=0$.

\begin{definition}\label{def:solutionMICP} For fixed $\lambda \in \Lambda$,
let $\Wt(\lambda)$ denote the set of all admissible controls
\begin{equation}\label{defWT}
\begin{aligned}
 \Wt(\lambda):=\{&(u,v) \in \Ut \times \Vt : \\
 &g_k^v(\lambda,u,t) \leq 0,~k=1,\ldots,M,~t \in [t_0,\tf]\}. 
\end{aligned}
\end{equation}
Moreover, we say that $y\: [t_0,\tf] \to Y$
is a \emph{solution of the mixed-integer control system}
if there exists an admissible pair of controls $(u,v)\in\Wt(\lambda)$ such that
$y \in C([t_0,\tf];Y)$ satisfies the integral equation
\begin{equation}\label{eq:controlsysint}
y(t) = T(t-t_0)y(t_0) + \int_{t_0}^{t} T(t-s)f(s,y(s),u(s),v(s))\,ds,~t\in[t_0,\tf]
\end{equation}
and \eqref{eq:initialcondition} holds. Let $\Sscr_{[t_0,\tf]}(\lambda)$
denote the set of all such solutions $y$ defined on $[t_0,\tf]$. For any $y \in \Sscr_{[t_0,\tf]}(\lambda)$,
we denote by $y=y(\cdot;y_0(\lambda),u,v)$ the dependency of $y$ on $y_0(\lambda)$, $u$ and $v$ if needed.
\end{definition}
According to Definition~\ref{def:solutionMICP}, $\Sscr_{[t_0,\tf]}(\lambda)$ consists of the mild solutions
of equation~\eqref{eq:controlsys} and covers in an abstract sense many evolution problems involving linear partial
differential operators \cite{Pazy1983}. It particular, the mild solutions coincide with the usual concept of
weak solutions in case of linear parabolic partial differential equations on reflexive $Y$ with distributed control
where $A$ arises from a time-invariant variational problem \cite{BensoussanDaPratoDelfourMitter1992}. For an example,
see Section~\ref{sec:example}.

In conjunction with the mixed-integer control system we consider a cost function
$\varphi\: \Lambda \times C([t_0,\tf];Y) \times \Ut \times \Vt \to \RR \cup \{\infty\}$
and define the \emph{mixed-integer optimal control problem} with parameter $\lambda$ as
\begin{equation}\label{eq:miocp}
\left.\begin{array}{l}
 \text{minimize}~\varphi(\lambda,y,u,v) \; \text{subject to}\\
 \quad \dot{y}(t)  =   A \, y(t)  + f(t,\,y(t),\, u(t), \, v(t)),\; t \in (t_0,\, \tf) \;{\rm a.e.},\\
 \quad y(0)= y_0(\lambda),
 \\
 \quad g_k^v(\lambda,u,t)  \leq 0~\text{for all}~t \in [t_0,\tf],~k=1,\ldots,M,\\
 \quad y \in C([t_0,\tf];Y),~u \in \Ut,~v \in \Vt.
\end{array}
\right\}
\end{equation}
We will study the corresponding \emph{optimal value} $\nu(\lambda)\in \RR \cup \{\pm\infty\}$ given by
\begin{equation}\label{eq:defnu}
\begin{array}{l}
 \nu(\lambda)=\inf \bigl\{\varphi(\lambda,y,u,v) :\\
 \quad \dot{y}(t)  =   A \, y(t)  + f(t,\,y(t),\, u(t), \, v(t)),\; t \in (t_0,\, \tf) \;{\rm a.e.},\\
 \quad y(0)= y_0(\lambda),
 \\
 \quad g_k^v(\lambda,u,t)  \leq 0~\text{for all}~t \in [t_0,\tf],~k=1,\ldots,M,\\
 \quad y \in C([t_0,\tf];Y),~u \in \Ut,~v \in \Vt\bigr\}
\end{array}
\end{equation}
in its dependency on the parameter $\lambda$.

For the mixed-integer control system, we will impose the following assumptions.
\begin{assumptions}\label{ass:ControlSys} The map $f\: [t_0,\tf] \times Y \times U \times \{v\} \to Y$ is continuous for all $v \in \Vcal$.
Moreover, there exists a function $k \in L^1(t_0,\tf)$ such that for all $(u,v) \in \Wt$, $y_1,y_2 \in Y$ and for almost every $t \in (t_0,\tf)$
\begin{alignat*}{2}
 &\text{(i)}\qquad &&|f(t,y_1,u(t),v(t))-f(t,y_2,u(t),v(t))| \leq k(t)|y_1 - y_2|\\
 &\text{(ii)}\qquad &&|f(t,0,u(t),v(t))| \leq k(t).
\end{alignat*}
\end{assumptions}
In particular, under these assumptions, the integral in \eqref{eq:controlsysint} is well-defined in the
Lebesgue-Bochner sense and from the theory of abstract Cauchy problems \cite{Pazy1983} we obtain a solution
$y$ in $C([0,\tf];Y)$ for all $y_0 \in Y$, $u \in \Ut$ and $v \in \Vt$. Moreover, the strong continuity of
$T(\cdot)$ and the Gronwall inequality yield the following solution properties.

\begin{lemma}\label{lem:bounds} Under the Assumptions~\ref{ass:ControlSys}, there exist constants $\gamma \geq 0$ and $w_0 \geq 0$ such that
for all $\lambda_1,\lambda_2 \in \Lambda$, setting $y_i=y(\cdot;y_0(\lambda_i),u,v)\in\Sscr_{[t_0,\tf]}(\lambda_i)$ for $i \in \{1,2\}$, for all $t \in [t_0,\tf]$
it holds $\|T(t)\| \leq \gamma \exp(w_0(t-t_0 ))$,
\begin{equation}\label{eq:aprioribound}
 |y_i(t)| \leq C(t) (1+|y_0(\lambda_i)|),\quad i \in \{1,2\},
\end{equation}
and
\begin{equation}\label{eq:yLipschitz}
 |y_1(t)-y_2(t))| \leq C(t) |y_0(\lambda_1) - y_0(\lambda_2)|
\end{equation}
with $C(t)=\gamma\exp\left(w_0 (t-t_0)+\gamma\int_{t_0}^{t}k(s)\,ds\right)$.
\end{lemma}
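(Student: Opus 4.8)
The plan is to extract all three estimates directly from the mild-solution representation \eqref{eq:controlsysint} together with the growth and Lipschitz bounds of Assumptions~\ref{ass:ControlSys}, each time closing the argument with Gronwall's inequality. For the semigroup bound I would invoke the standard growth estimate for $C_0$-semigroups: there are $\gamma \geq 1$ and $\omega \in \RR$ with $\|T(\tau)\| \leq \gamma e^{\omega\tau}$ for all $\tau \geq 0$; enlarging the exponent we may take $w_0 := \max\{\omega,0\} \geq 0$, and this bound controls all the factors $T(t-t_0)$ and $T(t-s)$ occurring in \eqref{eq:controlsysint}.

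For the a priori bound \eqref{eq:aprioribound} I would fix $i$, abbreviate $y = y_i$, $y_0 = y_0(\lambda_i)$, take norms in \eqref{eq:controlsysint}, and use Assumptions~\ref{ass:ControlSys}(i)--(ii) in the form $|f(s,y(s),u(s),v(s))| \leq k(s)(1+|y(s)|)$ for a.\,e.\ $s$ to obtain
\[
|y(t)| \leq \gamma e^{w_0(t-t_0)}|y_0| + \gamma\int_{t_0}^{t} e^{w_0(t-s)}k(s)\bigl(1+|y(s)|\bigr)\,ds .
\]
Adding $1$ to both sides, absorbing the constant into the initial term via $\gamma e^{w_0(t-t_0)} \geq 1$, and multiplying through by $e^{-w_0(t-t_0)}$, the continuous function $\phi(t) := e^{-w_0(t-t_0)}(1+|y(t)|)$ (continuous since $y \in C([t_0,\tf];Y)$) satisfies $\phi(t) \leq \gamma(1+|y_0|) + \gamma\int_{t_0}^{t} k(s)\phi(s)\,ds$. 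The $L^1$-version of Gronwall's inequality then gives $\phi(t) \leq \gamma(1+|y_0|)\exp\bigl(\gamma\int_{t_0}^{t}k(s)\,ds\bigr)$, and multiplying back by $e^{w_0(t-t_0)}$ together with $|y(t)| \leq 1+|y(t)|$ yields \eqref{eq:aprioribound} with precisely the stated $C(t)$.

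The Lipschitz estimate \eqref{eq:yLipschitz} follows along the same lines and is even simpler: subtracting the representations \eqref{eq:controlsysint} for $y_1$ and $y_2$ with the \emph{same} pair $(u,v)$, taking norms and using Assumptions~\ref{ass:ControlSys}(i) gives
\[
|y_1(t)-y_2(t)| \leq \gamma e^{w_0(t-t_0)}|y_0(\lambda_1)-y_0(\lambda_2)| + \gamma\int_{t_0}^{t} e^{w_0(t-s)}k(s)\,|y_1(s)-y_2(s)|\,ds ,
\]
so that $\delta(t) := e^{-w_0(t-t_0)}|y_1(t)-y_2(t)|$ satisfies $\delta(t) \leq \gamma|y_0(\lambda_1)-y_0(\lambda_2)| + \gamma\int_{t_0}^{t}k(s)\delta(s)\,ds$, and Gronwall's inequality again produces \eqref{eq:yLipschitz} with the same $C(t)$.

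The only point requiring real care is that the convolution kernel $e^{w_0(t-s)}$ in the integral inequalities depends on $t$ as well as on $s$, which obstructs a direct application of the scalar Gronwall lemma; the rescaling $\phi(t) = e^{-w_0(t-t_0)}(\cdots)$ (respectively $\delta$) removes this dependence and reduces everything to the standard Gronwall estimate with the $L^1$ kernel $\gamma k$. Beyond this, one need only note that $s \mapsto f(s,y(s),u(s),v(s))$ is strongly measurable and lies in $L^1(t_0,\tf;Y)$, so that all Bochner integrals and estimates above are meaningful — but this is exactly what Assumptions~\ref{ass:ControlSys} together with the continuity of $y$ provide, and it is already used in asserting $y \in C([t_0,\tf];Y)$.
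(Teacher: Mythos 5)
Your argument is correct and is exactly the route the paper intends: it gives no written proof of Lemma~\ref{lem:bounds}, merely noting that the estimates follow from the semigroup growth bound and the Gronwall inequality applied to the mild-solution formula \eqref{eq:controlsysint}, which is precisely what you carry out (including the useful rescaling by $e^{-w_0(t-t_0)}$ to reduce to the standard $L^1$-kernel Gronwall lemma). The constants you obtain match the stated $C(t)=\gamma\exp\left(w_0(t-t_0)+\gamma\int_{t_0}^{t}k(s)\,ds\right)$, so nothing further is needed.
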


For the cost function and control constraints, we will impose the following assumptions.
\begin{assumptions}\label{ass:CostAndConstraints}
The function $\varphi\: \Lambda \times C([t_0,\tf];Y) \times \Ut \times \Vt \to \RR$ is continuous and, for every $v \in \Vt$, the
functions $g^v_1,\ldots,g^v_M\: \Lambda \times \Ut \times [t_0,\tf] \to \RR$ are such that the set of admissible controls $\Wt(\lambda)$ is not empty for all 
$\lambda \in \Lambda$.
\end{assumptions}
In particular, under Assumptions~\ref{ass:ControlSys} and~\ref{ass:CostAndConstraints}, for every $\lambda \in \Lambda$,
the set $\Sscr_{[t_0,\tf]}(\lambda)$ is non-empty. Moreover, one obtains local Lipschitz
continuity of the value function if the perturbation parameter $\lambda$ acts Lipschitz continuously on $\varphi$ and $y_0$
by similar arguments as in a classical Banach or Hilbert space case \cite{CannarsaFrankowska1992,BarbuDaPrato1983}.

\begin{theorem}\label{thm:LipInitial} Under the Assumptions~\ref{ass:ControlSys} and~\ref{ass:CostAndConstraints}, suppose that
the constraint functions $g^v_1,\ldots,g^v_M$ are independent of $\lambda$. Let $\bar\lambda$ be some fixed parameter in $\Lambda$ and assume that for some
bounded neighborhood $B(\bar\lambda)$ of $\bar{\lambda}$ and some constant $L_0$
\begin{equation}\label{eq:y0Lip}
|y_0(\lambda_1)-y_0(\lambda_2)| \leq L_{0} \, |\lambda_1-  \lambda_2|,\,~\lambda_1, \lambda_2 \in B(\bar\lambda).
\end{equation}
Moreover, let $K=\sup_{\lambda \in B(\bar\lambda)}|y_0(\lambda)|$ and assume that for some constant $L_{\varphi}$
  \begin{equation}\label{eq:varphiLip}
   |\varphi(\lambda_1,y,u,v)-\varphi(\lambda_2,\bar y,u,v)| \leq L_{\varphi}(|y-\bar{y}|+|\lambda_1-\lambda_2|)
  \end{equation}
for all $(u,v)\in \Wt$, $y,\bar{y}$ such that $\max\{|y|,|\bar y|\} \leq C(\tf)(1+K)$ and
$\lambda_1$, $\lambda_2 \in B(\bar\lambda)$, where $C(t)$ is the
bound from Lemma~\ref{lem:bounds}. Then there exists a constant $\hat L_\nu$ such that
\begin{equation}\label{eq:nuLipInitial}
 |\nu(\lambda_1)-\nu({\lambda_2})| \leq \hat L_\nu  |\lambda_1 - \lambda_2|,\quad~\lambda_1, \lambda_2 \in B(\bar\lambda).
\end{equation}
\end{theorem}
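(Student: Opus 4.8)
The plan is to exploit that, with the constraint functions $g_k^v$ independent of $\lambda$, the feasible control set in \eqref{defWT} does not depend on the parameter, i.e.\ $\Wt(\lambda)=\Wt$ for every $\lambda\in\Lambda$. Since Assumptions~\ref{ass:ControlSys} guarantee a unique mild solution of \eqref{eq:controlsysint} for each $(u,v)\in\Wt$ and each initial state, one may write
\[
 \nu(\lambda) = \inf_{(u,v)\in\Wt} \varphi\bigl(\lambda,\, y(\cdot;y_0(\lambda),u,v),\, u,\, v\bigr),
\]
so that $\nu$ is an infimum, over the fixed ($\lambda$-independent) index set $\Wt$, of a family of real-valued functions of $\lambda$. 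The first step is to show that each member of this family is Lipschitz in $\lambda$ on $B(\bar\lambda)$ with a constant that is \emph{uniform} in $(u,v)$; the conclusion then follows from the elementary fact that the pointwise infimum of a uniformly equi-Lipschitz family of real-valued functions is Lipschitz with the same constant.

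For the uniform estimate, fix $(u,v)\in\Wt$ and $\lambda_1,\lambda_2\in B(\bar\lambda)$, and set $y_i=y(\cdot;y_0(\lambda_i),u,v)$ for $i\in\{1,2\}$. Since $B(\bar\lambda)$ is bounded and $y_0$ is Lipschitz on it by \eqref{eq:y0Lip}, the quantity $K=\sup_{\lambda\in B(\bar\lambda)}|y_0(\lambda)|$ is finite, and Lemma~\ref{lem:bounds} gives $\sup_{t\in[t_0,\tf]}|y_i(t)|\le C(\tf)(1+K)$, so that the trajectories $y_1,y_2$ lie in the region where the Lipschitz bound \eqref{eq:varphiLip} on $\varphi$ applies. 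Lemma~\ref{lem:bounds} together with \eqref{eq:y0Lip} also yields $\sup_{t\in[t_0,\tf]}|y_1(t)-y_2(t)|\le C(\tf)\,|y_0(\lambda_1)-y_0(\lambda_2)|\le C(\tf)L_0\,|\lambda_1-\lambda_2|$. Inserting these two facts into \eqref{eq:varphiLip} gives
\[
 \bigl|\varphi(\lambda_1,y_1,u,v)-\varphi(\lambda_2,y_2,u,v)\bigr| \le L_\varphi\bigl(C(\tf)L_0+1\bigr)\,|\lambda_1-\lambda_2|,
\]
which is the desired estimate with the $(u,v)$-independent constant $\hat L_\nu:=L_\varphi\bigl(1+C(\tf)L_0\bigr)$.

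Finally, I would conclude with the standard infimum argument: for every $(u,v)\in\Wt$ the previous display gives $\varphi(\lambda_1,y_1,u,v)\le\varphi(\lambda_2,y_2,u,v)+\hat L_\nu|\lambda_1-\lambda_2|$; taking the infimum over $(u,v)\in\Wt$ on both sides yields $\nu(\lambda_1)\le\nu(\lambda_2)+\hat L_\nu|\lambda_1-\lambda_2|$, and exchanging the roles of $\lambda_1$ and $\lambda_2$ gives the reverse inequality, hence \eqref{eq:nuLipInitial}. (Note that $\nu(\lambda)<\infty$ on $B(\bar\lambda)$ because $\Wt\neq\emptyset$ and $\varphi$ is real-valued under Assumptions~\ref{ass:CostAndConstraints}; the estimate just derived additionally shows that $\nu$ is either finite throughout $B(\bar\lambda)$ or identically $-\infty$ there, the latter being a degenerate case in which \eqref{eq:nuLipInitial} is vacuous.) I do not expect a serious obstacle; the only points requiring care are the uniformity of the constant in $(u,v)$—which rests on the $\lambda$-independence of $\Wt$ and on $K<\infty$ coming from the boundedness of $B(\bar\lambda)$—and checking that the a~priori bound of Lemma~\ref{lem:bounds} indeed places the compared trajectories inside the domain of validity of \eqref{eq:varphiLip}.
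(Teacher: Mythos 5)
Your proposal is correct and follows essentially the same route as the paper: both rest on the $\lambda$-independence of $\Wt$, the bounds of Lemma~\ref{lem:bounds} combined with \eqref{eq:y0Lip} and \eqref{eq:varphiLip} to get the uniform estimate with constant $L_\varphi(C(\tf)L_0+1)$, and then pass to the infimum (the paper phrases this via $\varepsilon$-optimal controls, which is the same argument as your equi-Lipschitz-family step). No gaps; the constant $\hat L_\nu$ you obtain coincides with the paper's \eqref{eq:hatLnudef}.
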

\begin{proof}
Let $\varepsilon>0$ and $\lambda_1$, $\lambda_2 \in B(\bar\lambda)$ be given. Choose $(u_\varepsilon,v_\varepsilon) \in \Wt$ such that
\begin{equation*}
 \varphi(\lambda_2,\bar{y}_\varepsilon,u_\varepsilon,v_\varepsilon) \leq \nu(\lambda_2)+\varepsilon,
\end{equation*}
where $\bar{y}_\varepsilon$ denotes the reference solution $y(\cdot;y_0(\lambda_2),u_\varepsilon,v_\varepsilon)\in\Sscr_{[t_0,\tf]}$.
Let $y_\varepsilon$ denote the perturbed solution $y(\cdot;y_0(\lambda_1),u_\varepsilon,v_\varepsilon)\in\Sscr_{[t_0,\tf]}$.
Lemma~\ref{lem:bounds} and the assumptions yield
\begin{equation*}
 |y_\varepsilon(t)| \leq C(t) (1+K),~t \in [t_0,\tf],
\end{equation*}
and
\begin{equation*}
 |y_\varepsilon(t)- \bar{y}_\varepsilon(t)| \leq C(t) L_{0} |\lambda_1 -  \lambda_2|,~t \in [t_0,\tf].
\end{equation*}
Hence,
\begin{alignat*}{1}
 \varphi(\lambda_1,y_\varepsilon,u_\varepsilon,v_\varepsilon)
 &\leq \varphi(\lambda_2,\bar y_\varepsilon,u_\varepsilon,v_\varepsilon)+|\varphi(\lambda_1,y_\varepsilon,u_\varepsilon,v_\varepsilon) - \varphi(\lambda_2,\bar y_\varepsilon,u_\varepsilon,v_\varepsilon)|\\
 &\leq \varphi(\lambda_2,\bar y_\varepsilon,u_\varepsilon,v_\varepsilon)+L_\varphi (C(\tf) L_{0}+1) |\lambda_1 -  \lambda_2|.
\end{alignat*}
Thus
\begin{equation*}
\begin{aligned}
 \nu(\lambda_1) &\leq \varphi(\lambda_1,y_\varepsilon,u_\varepsilon,v_\varepsilon)
 \leq \varphi(\lambda_2,\bar y_\varepsilon,u_\varepsilon,v_\varepsilon)
 +L_\varphi (C(\tf) L_{0}+1) |\lambda_1 -  \lambda_2|
 \\
 &\leq  \nu({\lambda_2})+\varepsilon+L_\varphi (C(\tf) L_{0}+1) |\lambda_1 -  \lambda_2|.
\end{aligned}
\end{equation*}
Letting $\varepsilon \to 0$ from above gives an upper bound $\nu(\lambda_1) \leq \nu({\lambda_2})+\hat L_\nu |\lambda_1 -  \lambda_2|$
with 
\begin{equation}\label{eq:hatLnudef}
\hat L_\nu=L_\varphi (C(\tf) L_{0}+1). 
\end{equation}
Interchanging the roles of $\lambda_1$ and $\lambda_2$ yields the claim.
\end{proof}

In the subsequent section, we will obtain a similar result concerning the perturbation of the functions $g^v_1,\ldots,g^v_M$ and the cost function $\varphi$ under additional structural hypothesis and a constraint qualification.

\section{Perturbation of the constraints for convex problems}\label{sec:constraints}
In this section, we show that under a Slater-type condition the optimal value $\nu(\lambda)$ of the mixed-integer optimal control problem \eqref{eq:miocp}
in the case of a convex cost function and linear dynamics is locally Lipschitz continuous as a function of a parameter $\lambda$
acting on the control constraints $g^v_1,\ldots,g^v_M$ and the cost function $\varphi$. We need the following
\begin{assumptions}\label{ass:Convex}
The map $(y,u) \mapsto f(t,y,u,v)$ is linear and the map $(y,u) \mapsto \varphi(\lambda,y,u,v)$ is convex.
Moreover, the function $\varphi$ is Lipschitz continuous
with respect to $\lambda$ in the sense that
\begin{equation}
|\varphi(\lambda_1,\, y,\, u,\, v) - \varphi(\lambda_2,\, y,\, u,\, v)|
\leq
L_\varphi(|y|,\, |u|) \,|\lambda_1 - \lambda_2|
\end{equation}
with a continuous function $L_\varphi\: [0,\infty)^2 \rightarrow [0,\,\infty)$. For all $k=1,\ldots,M$, the maps $u \mapsto g_k^v(\lambda,\,u,\, t)$ are convex,
the maps $(u,t)\mapsto g_k^v(\lambda,\,u,\, t)$ are continuous and the functions $g^v_k$ are Lipschitz continuous 
with respect to $\lambda$ in the sense that for all $t \in [t_0,\tf]$
\begin{equation}
|g_k^v(\lambda_1,\, u,\, t) - g_k^v(\lambda_2,\, u,\, t)|
\leq
L_g(|u|) \,|\lambda_1 - \lambda_2|
\end{equation}
with a continuous function $L_g\: [0,\infty) \rightarrow [0,\,\infty)$.
\end{assumptions}

Under the Assumptions~\ref{ass:ControlSys}--\ref{ass:Convex} and assuming that $y_0$ is independent of $\lambda$,
we have for each parameter $\lambda \in \Lambda$ the mixed-integer optimal control problem \eqref{eq:miocp} with 
$y_0(\lambda)$ replaced by a fixed initial state $y_0 \in Y$. Moreover, in this section, $\nu(\lambda)$ denotes
the corresponding optimal value function \eqref{eq:defnu} with fixed initial state $y_0$.
The subsequent analysis is based upon the presentation in \cite{gu97}, where for the finite
dimensional case the existence of the one sided derivatives of the optimal value function $\nu(\lambda)$
is shown. For a generalization to the above setting, we first introduce a Slater-type constraint qualification,
a dual problem and prove a strong duality result.

\begin{assumptions}{\bf (CQ)}\label{ass:CQ}
For some $\bar \lambda \in \Lambda$ and some bounded neighborhood $B(\bar \lambda)\subset \Lambda$ of $\bar \lambda$
there exists a number $\omega>0$ such that for all $v\in \Vt$ there is a Slater point $\bar u_v \in U$ such that
for all $\lambda \in B(\bar \lambda)$ we have
\begin{equation}\label{eq:slaterpoint}
g_k^v(\lambda, \bar u_v,t) \leq - \omega\quad\mbox{\rm for all } t \in [t_0,\tf],~k=1,\ldots,M,
\end{equation}
\begin{equation}
\label{30}
\sup_{v\in V_{[t_0,\,\tf]}}  \sup_{\lambda \in B(\bar \lambda)} \varphi(\lambda,y(\bar u_v,v),\bar u_v,v)<\infty
\end{equation}
and that there exists a number $\underline \alpha$ such that for all $\lambda \in B(\bar \lambda)$ we have
\begin{equation}
\label{31}
\nu(\lambda) \geq \underline \alpha
\end{equation}
and that the set
\begin{equation}
\label{coercive}
\hspace*{-1em}
\begin{aligned}
\bigcup_{\lambda_1,\lambda_2 \in B(\bar \lambda)}
\biggl\{& (u,v) \in U_{[t_0,\tf]}\times \Vt:\\
&~\varphi(\lambda_1,y(u,v),u,v) \leq \varphi(\lambda_1,y(\bar u_v,v),\bar u_v,v) + |\lambda_1 - \lambda_2|^2,\\
&~g_k^v(\lambda_1,u(t),t) \leq 0,~t \in [t_0,\tf],~k=1,\ldots,M\biggr\}=:\bar S(y_0)
\end{aligned}
\end{equation}
is bounded.
\end{assumptions}

Note that (\ref{31}) holds if
$\underline \alpha$ is a lower bound for the cost function.

Let $C([t_0,\tf])^\ast_+$ denote the set of  positive function of bounded variation on $[t_0,\tf]$.
For any controls $v \in \Vt$, $u \in \Ut$ and any
$\mu^\ast \in \left(C([t_0,\tf])^\ast_+\right)^M$
we define the Lagrangian
\begin{equation}\label{lagrangian}
\Lagr_v(\lambda,u,\mu^\ast) = \varphi(\lambda,\,y(u,v),\,u,\, v) + \sum_{k=1}^M \int_{t_0}^{\tf}  g_k^v(\lambda,u,s)\,\mathrm{d} \mu^\ast_k(s),
\end{equation}
where the integral is in the Riemann-Stieltjes sense. Further, we define
\begin{equation} \label{lagrangian1}
h_v(\lambda,\mu^\ast) = \inf_{u \in \Ut} \Lagr_v(\lambda,u,\mu^\ast).
\end{equation}

Under the constraint qualification \CQref, for all fixed $\lambda \in B(\bar \lambda)$ and $v\in \Vt$,
the classical convex duality theory as presented in \cite{ekturn} implies
the strong duality result (see also \cite{gu:onesi}) 
\begin{equation}\label{duality}
\sup_{\mu^\ast \in \left(C([t_0,\tf])^\ast_+\right)^M} h_v(\lambda,\mu^\ast) = \nu^v(\lambda)
\end{equation}
where $\nu^v(\lambda)$ denotes the optimal value of the following convex optimal control problem
only in the variables $y$ and $u$
\begin{equation}\label{eq:primalvfix}
\left.
\begin{array}{l}
 \text{minimize}~\varphi(\lambda,y,u,v)~\text{subject to}\\
 \quad \dot{y}(t)  =    A \, y(t)  + f(t,\,y(t),\, u(t), \, v(t)),\; t \in (t_0,\, \tf) \;{\rm a.e.},
 ~y(0)= y_0,
 \\
 \quad g_k^v(\lambda,u,t) \leq 0~\text{for all}~t \in [t_0,\tf],~k \in \{1,\ldots,M\},\\
 \quad y \in C([t_0,\tf];Y),~u \in \Ut,
\end{array}
\right\}
\end{equation}
see, for example, \cite{gu:onesi,ekturn}. Further, we introduce the sets
\begin{equation}
  F_v(\lambda) =\{\mu^\ast \in \left(C([t_0,\tf])^\ast_+\right)^M : h_v(\lambda,\mu^\ast) > - \infty\}
\end{equation}
and
\begin{equation}
G(\lambda) = \left\{\rho \in \prod_{v \in \Vt} F_v(\lambda) : \inf_{v\in \Vt} h_v(\lambda,\rho_v) \in \RR\right\},
\end{equation}
and, for $(r,\rho) \in \mathbb{R}\times G(\lambda)$, we define the projection $\pi(r,\rho)=r$. Finally, we introduce the
following maximization problem as the dual problem of \eqref{eq:miocp} 
\begin{equation}\label{eq:dualconvex}
\left.
 \begin{array}{l}
 \text{maximize}~\pi(r,\rho)~\text{subject to}\\
 \quad \rho \in G(\lambda),~r \in  \mathbb{R},\\
 \quad r \leq h_v(\lambda, \rho_v)~\text{for all}~v\in \Vt.
 \end{array}\right\}
\end{equation}
The optimal value of this dual problem is
\begin{equation}
\Delta(\lambda) = \sup_{\rho\in G(\lambda)} \inf_{v\in \Vt} h_v(\lambda,\rho_v).
\end{equation}
Now we state a strong duality result.
For the convenience of the reader we also present a complete proof.
Note however that Theorem \ref{strongduality} can also be deduced
from Ky Fan's minimax theorem in
\cite{borweinzhuang86}.
\begin{theorem}[Strong duality]
\label{strongduality}
The constraint qualification \CQref~implies that
\begin{equation}
\nu(\lambda) = \Delta(\lambda),~\text{for all}~\lambda \in B(\bar \lambda),
\end{equation}
where $\nu(\lambda)$ is the optimal value of \eqref{eq:miocp} with fixed initial state.
\end{theorem}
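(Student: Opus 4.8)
The plan is to exploit the fact that the dual problem \eqref{eq:dualconvex} is \emph{decoupled} in the integer control: $v$ enters only as an index and the multiplier $\rho_v$ may be chosen independently for each $v\in\Vt$. Hence the min--max interchange needed to identify $\nu(\lambda)$ with $\Delta(\lambda)$ reduces to the classical strong duality \eqref{duality} applied separately for each fixed $v$, together with a pointwise selection over $\Vt$. The first step is the elementary identity $\nu(\lambda)=\inf_{v\in\Vt}\nu^v(\lambda)$: minimizing $\varphi$ over $(y,u,v)$ subject to the constraints of \eqref{eq:miocp} is the same as first fixing $v$, solving \eqref{eq:primalvfix}, and then taking the infimum over $v$.

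Next I would establish \emph{weak duality} $\Delta(\lambda)\le\nu(\lambda)$, which needs no constraint qualification. For any $v\in\Vt$, any $\mu^\ast\in(C([t_0,\tf])^\ast_+)^M$, and any $u\in\Ut$ admissible for \eqref{eq:primalvfix}, positivity of $\mu^\ast$ together with $g^v_k(\lambda,u,t)\le0$ for all $t$ gives $\sum_{k=1}^M\int_{t_0}^{\tf}g^v_k(\lambda,u,s)\,\mathrm{d}\mu^\ast_k(s)\le0$, hence $\Lagr_v(\lambda,u,\mu^\ast)\le\varphi(\lambda,y(u,v),u,v)$; taking the infimum over $u\in\Ut$ on the left and over admissible $u$ on the right yields $h_v(\lambda,\mu^\ast)\le\nu^v(\lambda)$. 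Consequently, for every $\rho\in G(\lambda)$ one has $\inf_{v\in\Vt}h_v(\lambda,\rho_v)\le\inf_{v\in\Vt}\nu^v(\lambda)=\nu(\lambda)$, and taking the supremum over $\rho\in G(\lambda)$ gives $\Delta(\lambda)\le\nu(\lambda)$.

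For the reverse inequality I would first use \CQref~to deduce the finiteness facts needed to apply \eqref{duality}. The Slater condition \eqref{eq:slaterpoint} makes $\bar u_v$ admissible for \eqref{eq:primalvfix}, so $\nu^v(\lambda)\le\varphi(\lambda,y(\bar u_v,v),\bar u_v,v)$, which by \eqref{30} is bounded above uniformly in $v$ and $\lambda\in B(\bar\lambda)$; combined with $\nu^v(\lambda)\ge\nu(\lambda)\ge\underline\alpha$ from \eqref{31} and the identity of the first paragraph, this shows $\nu^v(\lambda)\in\RR$ for all $v$ and $\nu(\lambda)\in\RR$. Then, given $\varepsilon>0$, the classical strong duality \eqref{duality} applied for each fixed $v$ and $\lambda$ yields a multiplier $\rho_v\in(C([t_0,\tf])^\ast_+)^M$ with $h_v(\lambda,\rho_v)>\nu^v(\lambda)-\varepsilon\ge\nu(\lambda)-\varepsilon$. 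Performing this selection for every $v\in\Vt$ produces $\rho=(\rho_v)_{v\in\Vt}$ satisfying $\nu(\lambda)-\varepsilon\le h_v(\lambda,\rho_v)\le\nu^v(\lambda)<\infty$ for all $v$; hence $\rho_v\in F_v(\lambda)$ and $\inf_{v\in\Vt}h_v(\lambda,\rho_v)\in\RR$, so $\rho\in G(\lambda)$. Therefore $\Delta(\lambda)\ge\inf_{v\in\Vt}h_v(\lambda,\rho_v)\ge\nu(\lambda)-\varepsilon$, and letting $\varepsilon\downto0$ gives $\Delta(\lambda)\ge\nu(\lambda)$. Combining the two inequalities proves $\nu(\lambda)=\Delta(\lambda)$ on $B(\bar\lambda)$.

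The main obstacle is not the interchange of $\inf_v$ and $\sup_{\mu^\ast}$ itself — which is essentially trivial once one observes that the dual variables for distinct $v$ are unconstrained and uncoupled — but rather the careful verification that \CQref~supplies exactly the hypotheses that the classical convex duality theory of \cite{ekturn} requires for the single-$v$ problem \eqref{eq:primalvfix} in this infinite-dimensional setting: a uniform Slater point for the pointwise-in-$t$ constraints, finiteness of the primal value, and the identification of $(C([t_0,\tf])^\ast_+)^M$ (with the Riemann--Stieltjes pairing) as the correct multiplier space. I would also note in passing that no measurability of the selection $v\mapsto\rho_v$ is needed, since $\rho$ appears in \eqref{eq:dualconvex} only through the scalar constraints $r\le h_v(\lambda,\rho_v)$, one for each $v\in\Vt$.
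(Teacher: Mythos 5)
Your proposal is correct and follows essentially the same route as the paper's proof: weak duality from $h_v(\lambda,\rho_v)\le\nu^v(\lambda)$ together with the identity $\nu(\lambda)=\inf_{v\in\Vt}\nu^v(\lambda)$, and the reverse inequality by invoking the per-$v$ convex strong duality \eqref{duality} under \CQref~to assemble a suitable $\rho\in G(\lambda)$. The only (harmless) difference is that the paper selects exact dual maximizers $\mu^\ast_v$ with $h_v(\lambda,\mu^\ast_v)=\nu^v(\lambda)$, whereas your $\varepsilon$-selection uses only the sup-equality in \eqref{duality} and so avoids assuming dual attainment, while also spelling out the finiteness checks ensuring $\rho\in G(\lambda)$.
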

\begin{proof}
Choose $\rho \in G(\lambda)$. Then convex weak duality implies that for all $v\in \Vt$ we have
\begin{equation}
h_v(\lambda,\rho_v) \leq \nu^v(\lambda).
\end{equation}
Thus
\begin{equation}
\inf_{v\in \Vt} h_v(\lambda,\rho_v) \leq \inf_{v\in \Vt} \nu^v(\lambda)=\nu(\lambda).
\end{equation}
This implies that
\begin{equation}
\Delta(\lambda) = \sup_{\rho\in G(\lambda)} \inf_{v\in \Vt} h_v(\lambda,\rho_v) \leq \nu(\lambda),
\end{equation}
that is, we have shown the weak duality. Further, due to \CQref  $\,$ and convex strong duality
from \eqref{duality}, for each $v \in \Vt$, we can choose some $\mu^\ast_v \in \left(C([t_0,\tf])^\ast_+\right)^M$
such that
\begin{equation}
h_v(\lambda, \mu^\ast_v) = \nu^v(\lambda).
\end{equation}
Define $\rho^\ast = (\mu^\ast_v)_{v\in \Vt}$. Then $\rho^\ast \in G(\lambda)$.
This yields
\begin{equation}
\begin{aligned}
\Delta(\lambda) &=\sup_{\rho\in G(\lambda)} \inf_{v\in \Vt} h_v(\lambda,\rho_v)\\
&\geq \inf_{v \in \Vt} h_v(\lambda,\mu^\ast_v)=\inf_{v \in \Vt} \nu^v(\lambda) = \nu(\lambda).
\end{aligned}
\end{equation}
Hence the strong duality follows.
\end{proof}

Based upon the above duality concept, we can now show the
Lipschitz continuity
of the optimal value function 
in a neighborhood of $\bar\lambda$. 
To this end, we introduce
for any $\varepsilon \geq 0$ the set of $\varepsilon$-optimal points
\begin{equation}
 \begin{aligned}
  P(\lambda,\varepsilon) =~&\bigl\{ u \in U_{[t_0,\tf]} : \text{there exists}~v \in \Vt~\text{such that}\\
  &\quad g_k^v(\lambda,u,t) \leq 0~\text{for all}~t \in [t_0,\tf],~k=1,\ldots,M,\\
  &\quad \varphi(\lambda,\,y(u,v),\,u,\, v) \leq \nu(\lambda) + \varepsilon \bigr\}
 \end{aligned}
\end{equation}
and we set $H(\lambda,\varepsilon) = \{ \rho \in G(\lambda) : \inf_{v\in V} h_v(\lambda,\rho_v) \geq  \nu(\lambda)- \varepsilon\}$.

\begin{lemma}\label{bounded}
Under \CQref, the set
\begin{equation}
\Omega(\bar \lambda   ):=\bigcup_{\lambda_1, \lambda_2 \in B(\bar \lambda),\,v\in \Vt} \left\{\rho_v : \rho \in H(\lambda_1,|\lambda_1 -  \lambda_2|^2) \right\}
\end{equation}
is bounded.
\end{lemma}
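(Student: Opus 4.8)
The plan is to use the Slater point supplied by \CQref{} to convert the lower bound on $\inf_{v}h_v(\lambda_1,\cdot)$ that is built into the definition of $H$ into a uniform upper bound on the total variation of the multipliers $\rho_v$.

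First I would fix $\lambda_1,\lambda_2\in B(\bar\lambda)$, an integer control $v\in\Vt$, and $\rho\in H(\lambda_1,|\lambda_1-\lambda_2|^2)$; the aim is to bound $\rho_v$ by a constant not depending on these choices. Since $\rho\in G(\lambda_1)$ and $v$ is one of the indices over which the infimum in the definition of $H$ is taken,
\[
 h_v(\lambda_1,\rho_v)\ \geq\ \inf_{v'\in\Vt} h_{v'}(\lambda_1,\rho_{v'})\ \geq\ \nu(\lambda_1)-|\lambda_1-\lambda_2|^2\ \geq\ \underline\alpha - D^2,
\]
where $D:=\sup_{\lambda,\lambda'\in B(\bar\lambda)}|\lambda-\lambda'|<\infty$ because $B(\bar\lambda)$ is bounded, and where I used $\nu(\lambda_1)\geq\underline\alpha$ from \eqref{31}. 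Note that only \eqref{eq:slaterpoint}, \eqref{30} and \eqref{31} of \CQref{} enter here; the coercivity of $\bar S(y_0)$ is not needed for this lemma.

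Next I would estimate $h_v(\lambda_1,\rho_v)$ from above by inserting the Slater point $\bar u_v$ (viewed as a constant control in $\Ut$) into the Lagrangian \eqref{lagrangian}:
\[
 h_v(\lambda_1,\rho_v)\ \leq\ \Lagr_v(\lambda_1,\bar u_v,\rho_v)\ =\ \varphi(\lambda_1,y(\bar u_v,v),\bar u_v,v) + \sum_{k=1}^{M}\int_{t_0}^{\tf} g_k^v(\lambda_1,\bar u_v,s)\,\mathrm{d}(\rho_v)_k(s).
\]
Identifying each component $(\rho_v)_k\in C([t_0,\tf])^\ast_+$ with a positive Borel measure of mass $\|(\rho_v)_k\|$, and using $g_k^v(\lambda_1,\bar u_v,s)\leq-\omega$ for all $s\in[t_0,\tf]$ from \eqref{eq:slaterpoint}, the Riemann--Stieltjes integral is at most $-\omega\|(\rho_v)_k\|$. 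Combined with the finite constant $S:=\sup_{v\in\Vt}\sup_{\lambda\in B(\bar\lambda)}\varphi(\lambda,y(\bar u_v,v),\bar u_v,v)$ furnished by \eqref{30}, this gives $h_v(\lambda_1,\rho_v)\leq S-\omega\sum_{k=1}^{M}\|(\rho_v)_k\|$.

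Putting the two bounds together and rearranging yields $\sum_{k=1}^{M}\|(\rho_v)_k\|\leq(S-\underline\alpha+D^2)/\omega=:R$, which depends on none of $\lambda_1,\lambda_2,v,\rho$. Hence $\Omega(\bar\lambda)$ lies in the closed ball of radius $R$ in $\bigl(C([t_0,\tf])^\ast_+\bigr)^M$, so it is bounded. The only points that need care are the identification of $C([t_0,\tf])^\ast_+$ with positive measures, so that the pointwise bound on $g_k^v(\lambda_1,\bar u_v,\cdot)$ translates into the stated bound on the Riemann--Stieltjes integral, and the admissibility of the constant function $t\mapsto\bar u_v$ in $\Ut$, so that the inequality $h_v(\lambda_1,\rho_v)\leq\Lagr_v(\lambda_1,\bar u_v,\rho_v)$ is justified; I do not expect a genuine obstacle.
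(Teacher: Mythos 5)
Your proposal is correct and follows essentially the same route as the paper's proof: bound $h_v(\lambda_1,\rho_v)$ from below by $\nu(\lambda_1)-|\lambda_1-\lambda_2|^2$ via the definition of $H$ and \eqref{31}, from above by $\Lagr_v(\lambda_1,\bar u_v,\rho_v)$ at the Slater point, and then use $g_k^v(\lambda_1,\bar u_v,\cdot)\leq-\omega$ together with \eqref{30} and the boundedness of $B(\bar\lambda)$ to extract a uniform bound on $\sum_k\int_{t_0}^{\tf}\mathrm{d}(\rho_v)_k$. Your explicit constant $R=(S-\underline\alpha+D^2)/\omega$ just makes the paper's final estimate uniform in $\lambda_1,\lambda_2$, and your two flagged points (positivity of the multipliers and admissibility of $\bar u_v$ as a constant control) are exactly the implicit conventions the paper uses.
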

\begin{proof} Due to assumption \CQref, for all $v \in \Vt$, we have the Slater point $\bar u_v$.
Choose  $\lambda_1$, $\lambda_2 \in B(\bar \lambda)$ 
and
$\rho \in H(\lambda_1,|\lambda_1 - \lambda_2|^2)$. Then $\inf_{v\in \Vt} h_v(\lambda_1,\rho_v) \geq \nu(\lambda_1)- |\lambda_1 - \lambda_2|^2$.
Thus by definition of $h_v$, for all $v\in \Vt$, we have that $\Lagr_v(\lambda_1,\bar u_v,\rho_v) \geq h_v(\lambda_1,\rho_v) \geq \nu(\lambda_1) - |\lambda_1 - \lambda_2|^2$.
By definition of $\Lagr_v$, this implies
\begin{equation}
\varphi(\lambda_1,y(\bar u_v,v),\bar u_v,v) + \sum_{k=1}^M \int_{t_0}^{\tf} g_k^v(\lambda_1,\bar u_v,s)\,\mathrm{d}(\rho_v)_k(s) \geq \nu(\lambda_1) - |\lambda_1 - \lambda_2|^2.
\end{equation}
Now using that
\begin{equation}
g_k^v(\lambda_1,\bar u_v,t) \leq - \omega<0~\text{for all}~t \in [t_0,\tf],~k \in \{1,\ldots,M\},
\end{equation}
we can divide by $-\omega<0$ and obtain due to (\ref{31})
\begin{eqnarray*}
\sum_{k=1}^M \int_{t_0}^{\tf} 1\,\mathrm{d} (\rho_v)_k(s) & \leq & \frac{ \nu(\lambda_1) - |\lambda_1 - \lambda_2|^2 - \varphi(\lambda_1,y(\bar u_v,v),\bar u_v, v)}{-\omega}\\
& = & \frac{|\lambda_1 - \lambda_2|^2 + \varphi(\lambda_1,y(\bar u_v,v),\bar u_v,v) - \nu(\lambda_1)}{\omega}\\
& \leq & \frac{|\lambda_1 - \lambda_2|^2 + \varphi(\lambda_1,y(\bar u_v,v),\bar u_v,v) - \underline \alpha}{\omega}\\
& \leq & \frac{|\lambda_1 - \lambda_2|^2 + \sup\limits_{v\in \Vt} \sup\limits_{\lambda \in B(\bar \lambda)} \varphi(\lambda,y(\bar u_v,v),\bar u_v,v) - \underline \alpha}{\omega}.
\end{eqnarray*}
Due to (\ref{30}) this yields the assertion.
\end{proof}

\begin{lemma} \label{liminf}
Suppose that \CQref~holds.
Then for all $\lambda_1$, $\lambda_2 \in B(\bar \lambda)$
we have
\begin{equation}
\nu(\lambda_1) - \nu(\lambda_2)
\geq
 -\underline C \, |\lambda_1 - \lambda_2|
\end{equation}
for some $\underline C$ in $\RR$.
\end{lemma}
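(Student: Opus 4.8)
The plan is to use the strong duality of Theorem~\ref{strongduality} together with the two uniform a priori bounds built into the constraint qualification~\CQref: boundedness of the near-optimal multiplier set $\Omega(\bar\lambda)$ from Lemma~\ref{bounded}, and boundedness of the near-optimal control set $\bar S(y_0)$ from~\eqref{coercive}. Since the asserted inequality is required for \emph{every} ordered pair, it is the same as proving $\nu(\lambda_2)-\nu(\lambda_1)\le\underline C\,|\lambda_1-\lambda_2|$ for all $\lambda_1,\lambda_2\in B(\bar\lambda)$, which is what I would do. The case $\lambda_1=\lambda_2$ is trivial, so fix $\lambda_1\ne\lambda_2$ in $B(\bar\lambda)$ and set $\varepsilon:=|\lambda_1-\lambda_2|^2>0$.

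First I would fix the dual data at $\lambda_2$. Since $\nu(\lambda_2)=\Delta(\lambda_2)$ is finite by Theorem~\ref{strongduality} and~\CQref, I can choose $\rho^\ast\in G(\lambda_2)$ with $\inf_{v\in\Vt}h_v(\lambda_2,\rho^\ast_v)\ge\nu(\lambda_2)-\varepsilon$, i.e.\ $\rho^\ast\in H(\lambda_2,|\lambda_1-\lambda_2|^2)$. By Lemma~\ref{bounded} every component $\rho^\ast_v$ lies in the bounded set $\Omega(\bar\lambda)$, so there is a constant $R$, depending only on $\bar\lambda$ and $B(\bar\lambda)$, with $\sum_{k=1}^M\int_{t_0}^{\tf}1\,\mathrm{d}(\rho^\ast_v)_k\le R$ for all $v\in\Vt$. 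Next I would fix primal data at $\lambda_1$: pick $(u_\varepsilon,v_\varepsilon)\in\Wt(\lambda_1)$ with $\varphi(\lambda_1,y(u_\varepsilon,v_\varepsilon),u_\varepsilon,v_\varepsilon)\le\nu(\lambda_1)+\varepsilon$. The Slater point $\bar u_{v_\varepsilon}$ is feasible for $(\lambda_1,v_\varepsilon)$ by~\eqref{eq:slaterpoint}, so $\nu(\lambda_1)\le\varphi(\lambda_1,y(\bar u_{v_\varepsilon},v_\varepsilon),\bar u_{v_\varepsilon},v_\varepsilon)$, whence $\varphi(\lambda_1,y(u_\varepsilon,v_\varepsilon),u_\varepsilon,v_\varepsilon)\le\varphi(\lambda_1,y(\bar u_{v_\varepsilon},v_\varepsilon),\bar u_{v_\varepsilon},v_\varepsilon)+|\lambda_1-\lambda_2|^2$; together with $g^{v_\varepsilon}_k(\lambda_1,u_\varepsilon(t),t)\le0$ this shows that $(u_\varepsilon,v_\varepsilon)$ lies in the set in~\eqref{coercive} for the index pair $(\lambda_1,\lambda_2)$, hence $(u_\varepsilon,v_\varepsilon)\in\bar S(y_0)$. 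As $\bar S(y_0)$ is bounded, $|u_\varepsilon|\le R_1$ for a constant $R_1$ depending only on $\bar\lambda$ and $B(\bar\lambda)$, and by Lemma~\ref{lem:bounds} (with the fixed initial state $y_0$) also $|y(u_\varepsilon,v_\varepsilon)(t)|\le C(\tf)(1+|y_0|)$ for all $t$.

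The core step is to insert $(u_\varepsilon,v_\varepsilon)$ into the Lagrangian at $\lambda_2$ and transport back to $\lambda_1$. From $h_{v_\varepsilon}(\lambda_2,\rho^\ast_{v_\varepsilon})\le\Lagr_{v_\varepsilon}(\lambda_2,u_\varepsilon,\rho^\ast_{v_\varepsilon})$ I would use the Lipschitz-in-$\lambda$ estimates of Assumption~\ref{ass:Convex} to replace $\varphi(\lambda_2,\cdot)$ by $\varphi(\lambda_1,\cdot)$ at the cost $L_\varphi(|y(u_\varepsilon,v_\varepsilon)|,|u_\varepsilon|)\,|\lambda_1-\lambda_2|$, and each integrand $g^{v_\varepsilon}_k(\lambda_2,u_\varepsilon,s)$ by $g^{v_\varepsilon}_k(\lambda_1,u_\varepsilon,s)$ at the cost $L_g(|u_\varepsilon|)\,|\lambda_1-\lambda_2|$, which — integrated against the positive measures $(\rho^\ast_{v_\varepsilon})_k$ and summed over $k$ — contributes at most $R\,L_g(|u_\varepsilon|)\,|\lambda_1-\lambda_2|$. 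By the bounds on $|u_\varepsilon|$ and $|y(u_\varepsilon,v_\varepsilon)|$ and the continuity of $L_\varphi,L_g$, one has $L_\varphi(|y(u_\varepsilon,v_\varepsilon)|,|u_\varepsilon|)\le L_1$ and $L_g(|u_\varepsilon|)\le L_2$ for constants $L_1,L_2$ depending only on $R_1$, $C(\tf)(1+|y_0|)$, $L_\varphi$ and $L_g$. Finally the leftover terms $\sum_k\int_{t_0}^{\tf}g^{v_\varepsilon}_k(\lambda_1,u_\varepsilon,s)\,\mathrm{d}(\rho^\ast_{v_\varepsilon})_k(s)$ are $\le0$ because $g^{v_\varepsilon}_k(\lambda_1,u_\varepsilon,\cdot)\le0$ by feasibility of $u_\varepsilon$ and $(\rho^\ast_{v_\varepsilon})_k$ is a positive measure. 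Writing $\underline C_0:=L_1+R\,L_2$, this gives
\begin{equation*}
h_{v_\varepsilon}(\lambda_2,\rho^\ast_{v_\varepsilon})\le\varphi(\lambda_1,y(u_\varepsilon,v_\varepsilon),u_\varepsilon,v_\varepsilon)+\underline C_0\,|\lambda_1-\lambda_2|\le\nu(\lambda_1)+\varepsilon+\underline C_0\,|\lambda_1-\lambda_2|.
\end{equation*}
Combining with $\nu(\lambda_2)=\Delta(\lambda_2)\le\inf_{v\in\Vt}h_v(\lambda_2,\rho^\ast_v)+\varepsilon\le h_{v_\varepsilon}(\lambda_2,\rho^\ast_{v_\varepsilon})+\varepsilon$ and with $\varepsilon=|\lambda_1-\lambda_2|^2\le\operatorname{diam}(B(\bar\lambda))\,|\lambda_1-\lambda_2|$ (as $B(\bar\lambda)$ is bounded), I obtain $\nu(\lambda_2)-\nu(\lambda_1)\le\underline C\,|\lambda_1-\lambda_2|$ with $\underline C:=\underline C_0+2\operatorname{diam}(B(\bar\lambda))$, which is the assertion after exchanging the roles of $\lambda_1$ and $\lambda_2$.

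The main obstacle is keeping every constant uniform over $v\in\Vt$ and over $\lambda_1,\lambda_2\in B(\bar\lambda)$. The naive bound on $|\Lagr_v(\lambda_1,u,\mu^\ast)-\Lagr_v(\lambda_2,u,\mu^\ast)|$ is useless if $u$ is left free, since the Lipschitz moduli $L_\varphi(|y(u,v)|,|u|)$ and $L_g(|u|)$ may blow up as $|u|\to\infty$; it becomes usable only because the Lagrangian is evaluated at the \emph{one} near-optimal control $u_\varepsilon$, for which one knows a priori that $u_\varepsilon\in\bar S(y_0)$ — and this forces the particular choice $\varepsilon=|\lambda_1-\lambda_2|^2$ rather than an arbitrary $\varepsilon\downto0$, which is exactly why the quadratic slack $|\lambda_1-\lambda_2|^2$ appears both in the coercivity set~\eqref{coercive} and in the definition of $H(\cdot,|\lambda_1-\lambda_2|^2)$ underlying Lemma~\ref{bounded}.
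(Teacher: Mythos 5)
Your argument is correct and is essentially the paper's own proof, just written in the reversed direction: you pick an $|\lambda_1-\lambda_2|^2$-optimal primal pair at $\lambda_1$ (an element of $P(\lambda_1,|\lambda_1-\lambda_2|^2)$) and an $|\lambda_1-\lambda_2|^2$-optimal multiplier at $\lambda_2$ (an element of $H(\lambda_2,|\lambda_1-\lambda_2|^2)$, available by Theorem~\ref{strongduality}), compare the Lagrangians at $\lambda_1$ and $\lambda_2$ using Assumption~\ref{ass:Convex}, drop the nonpositive constraint terms, and make the constants uniform via the boundedness of $\bar S(y_0)$ and of $\Omega(\bar\lambda)$ from Lemma~\ref{bounded}. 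The only difference is cosmetic: you spell out why the near-optimal pair lies in $\bar S(y_0)$ (feasibility of the Slater point) and absorb the $2|\lambda_1-\lambda_2|^2$ slack into the constant via the diameter of $B(\bar\lambda)$, exactly as in the paper's definition of $\tilde C$.
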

\begin{proof}

Let
$\lambda_1$, $\lambda_2 \in  B(\bar\lambda)$ be given.
Choose a solution $u\in P(\lambda_1,|\lambda_1-  \lambda_2|^2)$
and $\tilde v \in \Vt$ with
$g_j^{\tilde v}(\lambda_1, u, t) \leq 0$ for all $t \in [t_0,\tf]$, $j=1, \ldots,M$,
$\varphi(\lambda_1, \, y(u,\tilde v),\,u,\, \tilde v)
\leq \nu(\lambda_1) + |\lambda_1- \lambda_2|^2$ and $
\bar \rho \in H( \lambda_2,  |\lambda_1-   \lambda_2|^2)$.
Then we have
\begin{eqnarray*}
\nu(\lambda_1) - \nu(\lambda_2)
& \geq &
   \;\varphi(\lambda_1, y(u,\tilde v),u,\tilde v) - \inf_{v\in \Vt} h_v( \lambda_2, \bar \rho_v) - 2 |\lambda_1 - \lambda_2|^2
  \\
  & \geq &
 \;\varphi(\lambda_1, y(u,\, \tilde v),u, \, \tilde v) - h_{\tilde  v}( \lambda_2, \bar \rho_{\tilde v}) - 2 |\lambda_1 -  \lambda_2|^2
  \\
  & \geq &
  \;\varphi(\lambda_1, y(u,\, \tilde v),\, u, \tilde v) - \Lagr_{\tilde v}(\lambda_2,  u, \bar \rho_{\tilde v}) - 2 |\lambda_1 - \lambda_2|^2
 \\
   & \geq &
  \;\varphi(\lambda_1, y(u, \tilde v),u, \tilde  v)
  + \sum_{j=1}^M \int_{t_0}^{\tf}  g_j^{\tilde  v}(\lambda_1,u,s)\,\mathrm{d} \bar \rho_{\tilde v}(s)
 \\
  & ~ &
  \qquad~-\Lagr_{\tilde v}(\lambda_2, u, \bar \rho_{\tilde v}) - 2 |\lambda_1 - \lambda_2|^2
  \\
  & = &
  \Lagr_{\tilde v} (\lambda_1,\, u, \, \bar\rho_{\tilde v}) - \Lagr_{\tilde v} (\lambda_2,\, u, \, \bar\rho_{\tilde v}) - 2 |\lambda_1-  \lambda_2|^2
  \\
  & \geq &
  - \biggl[ L_\varphi( |y(u,\,\tilde  v)|,\, |u|)\\
  & ~ & \qquad~+  M \, L_g(|u|) \, \int_{t_0}^{\tf} \, d \bar \rho_{\tilde v}(s) + 2 |\lambda_1 -  \lambda_2| \biggr]\, |\lambda_1 -  \lambda_2 |.
\end{eqnarray*}
Due to (CQ), the set $\bar S(y_0)$ from (\ref{coercive}) is bounded. Thus our assumptions imply that the set
$\{y(\hat u,\, \hat v):\, (\hat u,\,\hat v)\in \bar S(y_0)\}$
is bounded (see (\ref{eq:aprioribound})).
Due to Lemma \ref{bounded}, the set $\Omega(\bar\lambda)$
is also bounded.
Since $L_\varphi$ and $L_g$ are continuous this allows us
to define the real  number
\begin{equation}
\label{underlinecdefinition}
\begin{split}
\tilde C =
&\sup_{(\hat u,\,\hat v)\in \bar S(y_0)}\, L_\varphi( |y(\hat u,\, \hat v)|,\, |\hat u|)\\
&\quad+  M\, L_g(|\hat u|) \,
 \sup_{ \hat \rho_w \in  \Omega(\bar \lambda)}  \int_{t_0}^{\tf} \, d \hat \rho_{w}(s) \;
  +2 \sup_{\lambda_1,\, \lambda_2 \in B(\bar \lambda)}|\lambda_1-\lambda_2|.
\end{split}
\end{equation}
Due to the definition of $ P(\lambda_1,\, |\lambda_1 - \lambda_2|^2)$
we have
$(u,\, \tilde v) \in \bar S(y_0)$.
Moreover, we have
$\bar \rho_{\tilde v} \in \Omega (\bar\lambda)$.
Hence we have
\begin{eqnarray*}
\nu(\lambda_1) - \nu(\lambda_2)
& \geq &
- \tilde C\; |\lambda_1 -  \lambda_2|
  \end{eqnarray*}
  and the assertion follows with
$  \underline C =  \tilde C$. 
\end{proof}

Similarly as in Lemma \ref{liminf},
by interchanging the roles of $\lambda_1 $ and $\lambda_2$,
and
with the choice
$\overline C =\tilde C$ with $\tilde C$ as defined in (\ref{underlinecdefinition})
we can prove the following Lemma:
\begin{lemma}\label{limsup}
Suppose that \CQref~holds.
Then, for all $\lambda_1,\, \lambda_2 \in B(\bar \lambda)$,
we have
\begin{equation}
\nu(\lambda_1) - \nu(\lambda_2)
\leq
 \overline C \, |\lambda_1 - \lambda_2|,
\end{equation}
for some $\overline C$ in $\RR$.
\end{lemma}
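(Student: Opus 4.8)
The plan is to reduce this statement to Lemma~\ref{liminf} by exploiting the symmetry of the setup. Observe that the constraint qualification \CQref~is posed symmetrically: the neighborhood $B(\bar\lambda)$, the Slater points $\bar u_v$, the bounds $\underline\alpha$ and the finiteness in~\eqref{30}, the set $\bar S(y_0)$, and consequently the constant $\tilde C$ defined in~\eqref{underlinecdefinition}, all involve only suprema over the full unordered pair $(\lambda_1,\lambda_2)$ ranging over $B(\bar\lambda)\times B(\bar\lambda)$ together with the symmetric quantity $|\lambda_1-\lambda_2|$; likewise $\Omega(\bar\lambda)$ from Lemma~\ref{bounded}. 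Hence I would simply apply Lemma~\ref{liminf} with the roles of $\lambda_1$ and $\lambda_2$ interchanged, obtaining
\[
\nu(\lambda_2)-\nu(\lambda_1)\ \geq\ -\underline C\,|\lambda_2-\lambda_1|\ =\ -\tilde C\,|\lambda_1-\lambda_2|,
\]
which upon rearranging is exactly the claimed inequality with $\overline C=\tilde C\in\RR$.

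If a self-contained argument is preferred, I would repeat verbatim the chain of estimates from the proof of Lemma~\ref{liminf} after swapping $\lambda_1\leftrightarrow\lambda_2$: choose $u\in P(\lambda_2,|\lambda_1-\lambda_2|^2)$ together with a $\tilde v\in\Vt$ realizing $\varphi(\lambda_2,y(u,\tilde v),u,\tilde v)\le\nu(\lambda_2)+|\lambda_1-\lambda_2|^2$ and feasibility $g_j^{\tilde v}(\lambda_2,u,t)\le 0$, and choose $\bar\rho\in H(\lambda_1,|\lambda_1-\lambda_2|^2)$. Strong duality (Theorem~\ref{strongduality}) together with the near-optimality of these choices gives
\[
\nu(\lambda_2)-\nu(\lambda_1)\ \geq\ \varphi(\lambda_2,y(u,\tilde v),u,\tilde v)-\inf_{v\in\Vt}h_v(\lambda_1,\bar\rho_v)-2|\lambda_1-\lambda_2|^2,
\]
and one bounds the right-hand side from below successively by $\varphi(\lambda_2,y(u,\tilde v),u,\tilde v)-h_{\tilde v}(\lambda_1,\bar\rho_{\tilde v})-2|\lambda_1-\lambda_2|^2$, by $\varphi(\lambda_2,y(u,\tilde v),u,\tilde v)-\Lagr_{\tilde v}(\lambda_1,u,\bar\rho_{\tilde v})-2|\lambda_1-\lambda_2|^2$, by $\Lagr_{\tilde v}(\lambda_2,u,\bar\rho_{\tilde v})-\Lagr_{\tilde v}(\lambda_1,u,\bar\rho_{\tilde v})-2|\lambda_1-\lambda_2|^2$ (using that the Stieltjes term $\sum_j\int_{t_0}^{\tf}g_j^{\tilde v}(\lambda_2,u,s)\,\mathrm{d}\bar\rho_{\tilde v}(s)$ is $\le 0$ by feasibility), and finally, via the $\lambda$-Lipschitz estimates for $\varphi$ and $g_k^v$ in Assumptions~\ref{ass:Convex}, by
\[
-\bigl[\,L_\varphi(|y(u,\tilde v)|,|u|)+M\,L_g(|u|)\textstyle\int_{t_0}^{\tf}d\bar\rho_{\tilde v}(s)+2|\lambda_1-\lambda_2|\,\bigr]\,|\lambda_1-\lambda_2|.
\]

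To make the resulting multiplicative constant independent of $\lambda_1,\lambda_2$ I would argue exactly as in Lemma~\ref{liminf}: $(u,\tilde v)\in\bar S(y_0)$ by the definition of $P(\lambda_2,|\lambda_1-\lambda_2|^2)$ and the optimality bound $\nu(\lambda_2)\le\varphi(\lambda_2,y(\bar u_{\tilde v},\tilde v),\bar u_{\tilde v},\tilde v)$, so $y(u,\tilde v)$ ranges in a bounded set by~\eqref{eq:aprioribound} and the boundedness of $\bar S(y_0)$ from \CQref; and $\bar\rho_{\tilde v}\in\Omega(\bar\lambda)$, which is bounded by Lemma~\ref{bounded}. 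Continuity of $L_\varphi$ and $L_g$ then yields the finite bound $\tilde C$ of~\eqref{underlinecdefinition}, and the assertion follows with $\overline C=\tilde C$. I do not expect a genuine obstacle here — the only thing to check is that $P(\cdot,\cdot)$ and $H(\cdot,\cdot)$ are nonempty for the swapped arguments and that every auxiliary object in \CQref~remains valid after the exchange, which is immediate because \CQref, $\bar S(y_0)$ and $\Omega(\bar\lambda)$ are already symmetric in the two parameters; this is precisely why the reduction in the first paragraph is the cleanest route.
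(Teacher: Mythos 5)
Your proposal is correct and is essentially the paper's own argument: the paper proves Lemma~\ref{limsup} precisely by interchanging the roles of $\lambda_1$ and $\lambda_2$ in the proof of Lemma~\ref{liminf} and taking $\overline C=\tilde C$ from \eqref{underlinecdefinition}, whose definition (like \CQref, $\bar S(y_0)$ and $\Omega(\bar\lambda)$) is symmetric in the two parameters. Your shortcut of simply invoking the statement of Lemma~\ref{liminf} with swapped arguments, as well as your verbatim rerun of its estimate chain with $u\in P(\lambda_2,|\lambda_1-\lambda_2|^2)$ and $\bar\rho\in H(\lambda_1,|\lambda_1-\lambda_2|^2)$, both match this reasoning.
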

The above analysis implies our main result about the Lipschitz continuity of the optimal value as a function of the parameter $\lambda$.
\begin{theorem}\label{thm:LipConstraints}
Under the Assumptions~\ref{ass:ControlSys}--\ref{ass:Convex}, for any $\bar\lambda \in \Lambda$ and a bounded neighborhood $B(\bar\lambda) \subset \Lambda$ satisfying the constraint qualification \CQref~it holds 
\begin{equation}\label{eq:nuLipCostConstraint}
 |\nu(\lambda_1) - \nu(\lambda_2)|
\leq
\tilde C
\,
|\lambda_1 -  \lambda_2|\quad\text{for all}~\lambda_1,\,\lambda_2 \in B(\bar \lambda)
\end{equation}
with $\tilde C$ as defined in \eqref{underlinecdefinition},
that is, 
the optimal value function $\nu$ is Lip\-schitz continuous 
in a neighborhood of $\bar \lambda$ 
with Lipschitz constant $\tilde C$.
\end{theorem}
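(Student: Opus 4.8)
The plan is to obtain the two-sided estimate \eqref{eq:nuLipCostConstraint} by simply combining the one-sided bounds already established in Lemma~\ref{liminf} and Lemma~\ref{limsup}, since all the substantial work — controlling the perturbation of the Lagrangian with respect to $\lambda$ and, crucially, bounding the relevant dual multipliers uniformly over $B(\bar\lambda)$ and over $v \in \Vt$ — has been carried out in those lemmas and in Lemma~\ref{bounded}.

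First I would fix $\lambda_1,\lambda_2 \in B(\bar\lambda)$ and invoke Lemma~\ref{liminf}, which yields $\nu(\lambda_1)-\nu(\lambda_2) \geq -\underline C\,|\lambda_1-\lambda_2|$ with $\underline C = \tilde C$, where $\tilde C$ is the constant from \eqref{underlinecdefinition}. Then I would invoke Lemma~\ref{limsup}, whose proof is that of Lemma~\ref{liminf} with the roles of $\lambda_1$ and $\lambda_2$ interchanged and which therefore gives $\nu(\lambda_1)-\nu(\lambda_2) \leq \overline C\,|\lambda_1-\lambda_2|$ with the \emph{same} constant $\overline C = \tilde C$. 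Combining the two inequalities produces $|\nu(\lambda_1)-\nu(\lambda_2)| \leq \tilde C\,|\lambda_1-\lambda_2|$, and since $\lambda_1,\lambda_2$ were arbitrary in $B(\bar\lambda)$ and $\tilde C$ depends on neither of them, this is exactly the asserted local Lipschitz continuity on $B(\bar\lambda)$ with Lipschitz constant $\tilde C$.

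There is essentially no remaining obstacle: the finiteness and $\lambda$-independence of $\tilde C$ rest on the constraint qualification \CQref{} through two inputs that are already in hand, namely the boundedness of the trajectory set $\{y(\hat u,\hat v) : (\hat u,\hat v) \in \bar S(y_0)\}$ — which follows from boundedness of $\bar S(y_0)$ in \eqref{coercive} together with the a priori bound \eqref{eq:aprioribound} — and the boundedness of the multiplier set $\Omega(\bar\lambda)$ from Lemma~\ref{bounded}, whose proof exploits the Slater point to bound the total variation of the $\rho_v$; continuity of $L_\varphi$ and $L_g$ from Assumption~\ref{ass:Convex} then turns these bounded sets into a finite supremum. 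The only point I would flag explicitly in the write-up is that Lemma~\ref{liminf} and Lemma~\ref{limsup} were deliberately proved with the same constant, so that this single $\tilde C$ serves as a genuine two-sided Lipschitz constant and the theorem may cite \eqref{underlinecdefinition} directly.
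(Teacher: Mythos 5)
Your proposal is correct and follows exactly the paper's own argument: the theorem is obtained by combining the lower bound of Lemma~\ref{liminf} and the upper bound of Lemma~\ref{limsup}, both with the common constant $\tilde C$ from \eqref{underlinecdefinition}. Your added remark on why $\tilde C$ is finite and independent of $\lambda_1,\lambda_2$ (via boundedness of $\bar S(y_0)$ and of $\Omega(\bar\lambda)$) matches the reasoning already contained in those lemmas.
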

\begin{proof}
The result follows from combining the proofs of Lemma~\ref{liminf} and~\ref{limsup}.
\end{proof}

\section{Joint perturbations}\label{sec:joint}
In this section, we study the joint local Lipschitz continuity of the value function $\nu$ with respect to $\lambda$ 
acting on the initial data, the constraints and the costs. We consider the mixed-integer optimal control problem \eqref{eq:miocp}.
In contrast to Section~\ref{sec:constraints} the initial state $y_0(\lambda)$ depends on $\lambda$. Also, the constraints
and the objective function depend on $\lambda$. The result is obtained by combining Theorem~\ref{thm:LipInitial} and~\ref{thm:LipConstraints}.

\begin{theorem}\label{thm:JointLip} Under the Assumptions~\ref{ass:ControlSys}--\ref{ass:Convex}, for any $\bar\lambda \in \Lambda$, a bounded neighborhood $B(\bar\lambda) \subset \Lambda$ 
let $L_0,L_{\varphi}$ be constants such that \eqref{eq:y0Lip} and \eqref{eq:varphiLip} hold as in Theorem~\ref{thm:LipInitial}. Further, suppose that (CQ) holds in the sense that \eqref{eq:slaterpoint} is satisfied and $\cup_{y_0 \in Y_0} \bar S(y_0)$ is bounded with $\bar S(y_0)$ from \eqref{coercive} and $Y_0 = \{y_0(\lambda) : \lambda \in B(\bar\lambda)\}$.
Then, 
there exists a constant $L_\nu$ such that
\begin{equation}\label{eq:nuJointlyLip}
|\nu(\lambda_1)-\nu(\lambda_2)| \leq L_\nu |\lambda_1 - \lambda_2|,\quad\text{for all}~\lambda_1,\,\lambda_2 \in B(\bar \lambda),
\end{equation}
where $\nu(\lambda)$ is the optimal value of \eqref{eq:miocp} as defined in \eqref{eq:defnu}.
\end{theorem}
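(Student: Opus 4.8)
The plan is to split the joint perturbation into two successive perturbations of the types already handled, via an auxiliary two-parameter value function. For $\sigma,\tau\in B(\bar\lambda)$, let $\mu(\sigma,\tau)$ denote the optimal value of \eqref{eq:miocp} in which the initial state is $y_0(\sigma)$ while the constraint functions $g_k^v(\tau,\cdot,\cdot)$ and the cost $\varphi(\tau,\cdot,\cdot,\cdot)$ are evaluated at $\tau$; thus $\nu(\lambda)=\mu(\lambda,\lambda)$. Under the standing assumptions each $\mu(\sigma,\tau)$ is finite: feasibility comes from Assumption~\ref{ass:CostAndConstraints} (and the Slater points \eqref{eq:slaterpoint}), a lower bound from \eqref{31}, and an upper bound by inserting a Slater point and using \eqref{30}. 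For $\lambda_1,\lambda_2\in B(\bar\lambda)$ I would then write
\begin{equation*}
|\nu(\lambda_1)-\nu(\lambda_2)|\le|\mu(\lambda_1,\lambda_1)-\mu(\lambda_1,\lambda_2)|+|\mu(\lambda_1,\lambda_2)-\mu(\lambda_2,\lambda_2)|
\end{equation*}
and estimate the two terms by Theorem~\ref{thm:LipConstraints} and Theorem~\ref{thm:LipInitial}, respectively.

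For the first term the initial state is frozen at $y_0(\lambda_1)$ and only the parameter acting on $g_k^v$ and $\varphi$ moves, so $\tau\mapsto\mu(\lambda_1,\tau)$ is exactly the setting of Section~\ref{sec:constraints} with fixed initial state $y_0=y_0(\lambda_1)$. Here one must check that \CQref\ holds for this problem: the Slater points \eqref{eq:slaterpoint} are given by hypothesis; the coercivity set $\bar S(y_0(\lambda_1))$ from \eqref{coercive} is contained in $\bigcup_{y_0\in Y_0}\bar S(y_0)$, hence bounded; and the finiteness requirements \eqref{30}, \eqref{31} hold — uniformly over $y_0\in Y_0$ — by continuity of $\varphi$ together with the a priori bound \eqref{eq:aprioribound} of Lemma~\ref{lem:bounds} and the boundedness of $Y_0$ and of $\bigcup_{y_0\in Y_0}\bar S(y_0)$. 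Theorem~\ref{thm:LipConstraints} then yields $|\mu(\lambda_1,\lambda_1)-\mu(\lambda_1,\lambda_2)|\le\tilde C_{y_0(\lambda_1)}\,|\lambda_1-\lambda_2|$. Since the constant $\tilde C$ in \eqref{underlinecdefinition} is built only from suprema over $\bar S(y_0(\lambda_1))$ and over the associated multiplier set $\Omega(\bar\lambda)$, and the latter is bounded by Lemma~\ref{bounded} (again uniformly in $y_0\in Y_0$ thanks to the uniform form of \eqref{30}--\eqref{31}), the constant can be replaced by a single value $\tilde C^\ast$ independent of $\lambda_1$.

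For the second term the constraint and cost parameter is frozen at $\lambda_2$ and only $y_0(\cdot)$ moves, so $\sigma\mapsto\mu(\sigma,\lambda_2)$ is the setting of Theorem~\ref{thm:LipInitial}: the constraint functions are independent of the moving parameter, \eqref{eq:y0Lip} holds by hypothesis, and \eqref{eq:varphiLip} with both cost arguments equal to $\lambda_2$ reduces to $|\varphi(\lambda_2,y,u,v)-\varphi(\lambda_2,\bar y,u,v)|\le L_\varphi|y-\bar y|$, a special case of the assumed \eqref{eq:varphiLip}. Hence Theorem~\ref{thm:LipInitial} gives $|\mu(\lambda_1,\lambda_2)-\mu(\lambda_2,\lambda_2)|\le\hat L_\nu\,|\lambda_1-\lambda_2|$ with $\hat L_\nu$ as in \eqref{eq:hatLnudef}. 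Combining the two estimates proves \eqref{eq:nuJointlyLip} with $L_\nu=\tilde C^\ast+\hat L_\nu$.

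The main obstacle is the uniformity in the first step: Theorem~\ref{thm:LipConstraints} in isolation only provides, for each fixed initial state $y_0$, a Lipschitz constant that a priori depends on $y_0$ through $\bar S(y_0)$ and through the multiplier set $\Omega(\bar\lambda)$. The hypothesis that $\bigcup_{y_0\in Y_0}\bar S(y_0)$ is bounded, combined with the a priori estimates of Lemma~\ref{lem:bounds}, is precisely what makes this constant uniform over all admissible initial states $y_0(\lambda_1)$, $\lambda_1\in B(\bar\lambda)$; once this is in hand, the remaining verifications — finiteness of \eqref{30}, the lower bound \eqref{31}, and boundedness of $\Omega(\bar\lambda)$ via Lemma~\ref{bounded} — are routine continuity and boundedness arguments.
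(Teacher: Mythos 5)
Your proposal is correct and follows essentially the same route as the paper: an auxiliary two-parameter value function, a triangle-inequality split into a pure initial-data perturbation handled by Theorem~\ref{thm:LipInitial} and a pure cost/constraint perturbation handled by Theorem~\ref{thm:LipConstraints}, with the boundedness of $\bigcup_{y_0\in Y_0}\bar S(y_0)$ and Lemmas~\ref{lem:bounds} and~\ref{bounded} used to make \eqref{30}, \eqref{31} and hence the constant $\tilde C$ uniform over $y_0\in Y_0$. The only difference is the choice of intermediate point (you pass through the value with initial state $y_0(\lambda_1)$ and cost/constraint parameter $\lambda_2$, the paper through $\nu(\lambda_1,y_0(\lambda_2))$), which is immaterial by symmetry.
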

\begin{proof} In this proof, for $\lambda \in B(\bar\lambda)$ and $y_0 \in Y$, we use the notation
\begin{equation}\label{eq:defnu2}
\begin{array}{l}
 \nu(\lambda,y_0)=\inf \bigl\{\varphi(\lambda,y,u,v) :\\
 \quad \dot{y}(t)  =   A \, y(t)  + f(t,\,y(t),\, u(t), \, v(t)),\; t \in (t_0,\, \tf) \;{\rm a.e.},
 ~y(0)= y_0,
 \\
 \quad g_k^v(\lambda,u,t)  \leq 0~\text{for all}~t \in [t_0,\tf],~k=1,\ldots,M,\\
 \quad y \in C([t_0,\tf];Y),~u \in \Ut,~v \in \Vt\bigr\}.
\end{array}
\end{equation}
Due to \eqref{eq:y0Lip} and \eqref{eq:varphiLip} the set $Y_0$ is bounded by the constant $K$ from Theorem~\ref{thm:LipInitial} 
and for all $y_0 \in Y_0$, $v \in \Vt$, $\lambda \in B(\bar\lambda)$ we have the upper bound
\begin{equation*}
\begin{split}
 \varphi(\lambda,y(y_0,\bar u_v,v),\bar u_v,v) \leq \varphi(\bar\lambda,&\,y(y_0(\bar\lambda),\bar u_v,v),\bar u_v,v)\\
 +~L_{\varphi}(|y(y_0,\bar u_v,v),\bar u_v,v)~-&~y(y_0(\bar\lambda),\bar u_v,v),\bar u_v,v)|+|\lambda-\bar\lambda|).
 \end{split}
\end{equation*}
Moreover, from Lemma~\ref{lem:bounds}, we obtain $|y(y_0,\bar u_v,v)| \leq C(\tf)(1+K)$. This implies \eqref{30}. Using similar arguments, we get a lower bound
\begin{equation*}
\nu(\lambda,y_0)\geq\inf_{y_0 \in Y_0} \inf_{v\in V_{[t_0,\,\tf]}}  \inf_{\lambda \in B(\bar \lambda)} \varphi(\lambda,y(y_0,\bar u_v,v),\bar u_v,v)=:\underline\alpha>-\infty.
\end{equation*}
This implies \eqref{31} with $\underline\alpha$ independent of $\lambda$. Thus Assumptions~\ref{ass:CQ} holds for all $y_0 \in Y_0$ and the proof of Lemma~\ref{bounded} shows that
the bound of the set $\Omega(\bar\lambda)$ is independent of $y_0$.
Due to \eqref{eq:y0Lip} the function $L_{\varphi}$ in Assumptions~\ref{ass:Convex} is constant and $\tilde C$ from \eqref{underlinecdefinition} reduces to 
\begin{equation*}
\begin{split}
\tilde C = L_\varphi~+  &~M \sup_{y_0 \in Y_0} \sup_{(\hat u,\hat v)\in \bar S(y_0)} L_g(|\hat u|) \, \sup_{ \hat \rho_w \in  \Omega(\bar \lambda)}  \int_{t_0}^{\tf} \, d \hat \rho_{w}(s) \\
&+ 2 \sup_{\lambda_1,\, \lambda_2 \in B(\bar \lambda)}|\lambda_1-\lambda_2|<\infty.
\end{split}
\end{equation*}
Now, let $\lambda_1,\lambda_2 \in B(\bar{\lambda})$. From Theorem~\ref{thm:LipInitial} with $\lambda_1$ as first argument of $\nu$ fixed we get
$|\nu(\lambda_1,y_0(\lambda_1))-\nu(\lambda_1,y_0(\lambda_2))|\leq \hat L_{\nu} |\lambda_1-\lambda_2|$ with $\hat L_{\nu}$ given by \eqref{eq:hatLnudef}. 
From Theorem~\ref{thm:LipConstraints} with $\lambda_2$ as an argument of $y_0$ fixed we get
$|\nu(\lambda_1,y_0(\lambda_2))-\nu(\lambda_2,y_0(\lambda_2))|\leq \tilde C |\lambda_1-\lambda_2|$. Thus we obtain the inequality
\begin{equation*}
 \begin{aligned}
  & |\nu(\lambda_1,y_0(\lambda_1))-\nu(\lambda_2,y_0(\lambda_2))|\\
  & \quad \leq  |\nu(\lambda_1,y_0(\lambda_1))-\nu(\lambda_1,y_0(\lambda_2))| + |\nu(\lambda_1,y_0(\lambda_2))-\nu(\lambda_2,y_0(\lambda_2))|\\
  & \quad \leq (\hat L_{\nu}+\tilde C) |\lambda_1-\lambda_2|\\
 \end{aligned}
 \end{equation*}
and \eqref{eq:nuJointlyLip} follows with $L_{\nu}=\hat L_{\nu}+\tilde C$. 
\end{proof}

\section{Example}\label{sec:example}
We discuss an academic application concerning the optimal positioning of an actuator motivated from applications in
thermal manufacturing \cite{IftimeDemetriou2009,HanteSager2013}.

\begin{example} Suppose that $\Omega \subset \RR^2$ is a bounded domain containing two non-overlapping control
domains $\omega_1$ and $\omega_2$. For simplicity, we assume that the boundaries of all these domains $\partial \Omega$, $\partial \omega_1$
and $\partial \omega_2$ are smooth. Let $\varepsilon,\delta>0$ be two given parameters, let $\chi_{\omega_i}$ denote the
characteristic function of $\omega_i$, let $v|_{[t_1,t_2]^+}$ denote the restriction
of $v$ to the non-negative part of the interval $[t_1,t_2]$ and let $\Delta y$ denote the Laplace operator.
For a time horizon with $\tf-t_0 > \delta$, we consider
the optimal control problem
\begin{equation*}
\begin{aligned}
&\text{minimize} ~\int_{t_0}^{\tf} \int_\Omega |y(t,x)-\hat{y}(t,x)|^2\,dx\,dt + \int_{t_0}^{\tf} |u(t)|^2\,dt\\
 &y_t - \Delta y + v(t)u(t) \chi_{\omega_1} + (1-v(t))u(t) \chi_{\omega_2} = 0\quad \text{on}~(t_0,\tf) \times \Omega\\
 &y=0\quad \text{on}~(t_0,\tf) \times \partial\Omega\\
 &y=\bar y\quad \text{on}~\{t_0\} \times \Omega\\
 &v(t) \in \Vcal=\{0,1\}\quad \text{on}~[t_0,\tf]\\
 &u(t) \in \begin{cases}[0,1+\varepsilon] & ~\text{if}~v|_{[t-\delta,t]^+} \equiv 1~\text{or}~v|_{[t-\delta,t]^+} \equiv 0 ~\text{a.\,e. on}~[t-\delta,t]^+\\
               [0,\varepsilon] &~\text{else}.
              \end{cases}
\end{aligned}
\end{equation*}
The combination of the actuator and constraints in this problem model that the continuous control $u$ is 
restricted  to a small uncontrollable disturbance $\varepsilon$
for a dwell-time period of length $\delta$ whenever a decision was taken to change the control region $\omega_1$ to $\omega_2$ or vice versa while the goal is
to steer the initial state $\bar{y} \in L^2(\Omega)$ as close as possible to a desired state $\hat{y} \in C([t_0,\tf];L^2(\Omega))$.
We consider a perturbation $\lambda=(\bar{y},\varepsilon,\hat{y})$, i.\,e., a joint perturbation of initial data, the disturbance and the tracking target.

We can consider this problem in the abstract setting with $Y=L^2(\Omega)$, $U=\RR$, $\Ut=L^\infty(t_0,\tf)$
\begin{equation*}
\begin{aligned}
&Ay = \laplace y,~y \in D(A)=H^2(\Omega) \cap H^{1}_0(\Omega),\\
&f(y,u,v)=f(u,v)=-(vu \chi_{\omega_1} + (1-v)u \chi_{\omega_2}),\\
&\varphi(\lambda,y,u,v)=\int_{t_0}^{\tf} \int_\Omega |y(t,x)-\hat{y}(t,x)|^2\,dx\,dt + \int_{t_0}^{\tf} |u(t)|^2\,dt,
\end{aligned}
\end{equation*}
defining
\[\bar u^v_\varepsilon(t) = \begin{cases}1+\varepsilon & ~\text{if}~v|_{[t-\delta,t]^+} \equiv 1~\text{or}~v|_{[t-\delta,t]^+} \equiv 0 ~\text{a.\,e. on}~[t-\delta,t]^+\\
               \varepsilon &~\text{else},
              \end{cases}\]
and setting $M=2$ and, for all $v \in \Vt$
\begin{eqnarray*}
g^v_1(\lambda,\, u,\, t) & = & \displaystyle \esssup_{s \in [t_0,\tf]} (u(s)-\bar u^v_\varepsilon(s)),
\\
g^v_2(\lambda,\, u,\, t) & = & \displaystyle \esssup\limits_{s \in [t_0,\tf]} (-u(s)).
\end{eqnarray*}
Here the $g^v_i(\lambda,\, u,\, \cdot)$ are constant with respect to $t$ and hence
continuous as functions of $t$.
Moreover, the maps $u \mapsto g^v_i(\lambda,\, u,\, \cdot)$ are
continuous in $L^\infty (t_0, \, \tf)$.
The objective function is convex with respect to $(y,u)$ 
and also the  maps $u \mapsto g^v_i(\lambda,\, u,\, t)$ are
convex.
Let $\varepsilon_1$, $\varepsilon_2>0$ be such that
without restriction we have
$\esssup_{s \in [t_0,\tf]} (u(s)-\bar u^v_{\varepsilon_1}(s))
\geq
\esssup_{s \in [t_0,\tf]} (u(s)-\bar u^v_{\varepsilon_2}(s))$.
Then we have
\begin{eqnarray*}
& &
|
g^v_1(\lambda_1,\, u,\, t)-
g^v_1(\lambda_2,\, u,\, t)
|
\\
&
=
&
\esssup_{s \in [t_0,\tf]} (u(s)-\bar u^v_{\varepsilon_1}(s))
-
\esssup_{s \in [t_0,\tf]} (u(s)-\bar u^v_{\varepsilon_2}(s))
\\
& =  &
\esssup_{s \in [t_0,\tf]} (u(s)+\bar u^v_{\varepsilon_2}(s) 
-\bar u^v_{\varepsilon_1}(s)-\bar u^v_{\varepsilon_2}(s))+\\
& & \qquad \qquad \qquad \qquad \qquad \qquad
-
\esssup_{s \in [t_0,\tf]} (u(s)-\bar u^v_{\varepsilon_2}(s))
\\
&
\leq
&
 \esssup_{s \in [t_0,\tf]}
 |\bar u^v_{\varepsilon_2}(s)   -\bar u^v_{\varepsilon_1}(s)|
 \\
&
\leq
&
|\varepsilon_2 - \varepsilon_1|
 .
\end{eqnarray*}
It is well-known that $(A,D(A))$ is the generator of a strongly continuous (analytic) semigroup of contractions $\{T(t)\}_{t \geq 0}$ on $Y$, see, e.\,g., \cite{Pazy1983}.
Also, Assumptions~\ref{ass:ControlSys}--\ref{ass:Convex} are easily verified and it is easy to see that \eqref{eq:y0Lip} and \eqref{eq:varphiLip} hold.
The constraint qualification (CQ) is satisfied with $\omega=\frac{\varepsilon}{2}$ and $\underline \alpha=0$.
The control constraints imply that the set $\bar S$ is bounded in $\Ut \times \Vt$ independently of the initial state $y_0$.
Hence we can conclude from Theorem~\ref{thm:JointLip} that the optimal value function $\nu$ is locally Lipschitz continuous jointly
as a function of $\lambda=(\bar{y},\varepsilon,\hat{y})$.
\end{example}

\section{Conclusion}
We have studied the optimal value function for control problems on Banach spaces
that involve both continuous and discrete control decisions. For control systems of
a semilinear type subject to control constraints, we have shown that the optimal value
depends locally Lipschitz continuously on perturbations of the initial data and 
costs under natural assumptions. For problems consisting of linear systems on a Banach 
space subject to convex
control inequality constraints, we have shown that the optimal value of convex cost
functions depend locally Lipschitz continuously on Lipschitz continuous
perturbations of the costs and the constraints under a Slater-type constraint
qualification. The result has been obtained by proving a strong duality for an appropriate
dual problem. 

By a combination of the above results we have for the linear, convex case 
obtained local Lipschitz continuity jointly for parametric initial data, control 
constraints and cost functions. The Example~\ref{ex:nonsmooth} shows that this result
is sharp in the sense that we can, in general, not expect much more regularity 
than we have proved.

Our analysis currently does not address the stability of the optimal control under 
perturbations. This is an interesting direction for future work.

\section*{Acknowledgements}
This work was supported by the DFG grant CRC/Transregio 154, projects C03 and A03. The authors thank the reviewers 
and the editors from \emph{Math. Control Signals Syst.} for the constructive suggestions. The final 
publication is available at \href{http://link.springer.com/article/10.1007/s00498-016-0183-4}{link.springer.com}
and \href{http://dx.doi.org/10.1007/s00498-016-0183-4}{doi:10.1007/s00498-016-0183-4}.

\end{document}